\title{A vertical Sato-Tate law for GL(4)}
\author{Tian An Wong}
\address{University of Michigan-Dearborn, Dearborn, 48128 MI, USA}
\email{tiananw@umich.edu}
\subjclass[2010]{Primary 11F55; Secondary 11F72, 11F30.}
\keywords{Sato-Tate law, Kuznetsov trace formula, orthogonality relation}
\newtheorem{thm}{Theorem}[section]
\newtheorem{lem}[thm]{Lemma}
\newtheorem{prop}[thm]{Proposition}
\theoremstyle{remark}\newtheorem{rem}[thm]{Remark}
\def\GL{\ensuremath {\mathrm{GL}}}
\def\SL{\ensuremath {\mathrm{SL}}}
\def\Q{\ensuremath {{\mathbb Q}}}
\def\Z{\ensuremath {{\mathbb Z}}}
\def\C{\ensuremath {{\mathbb C}}}
\def\R{\ensuremath {{\mathbb R}}}
\def\re{\ensuremath {{\text{Re}}}}
\newcommand{\abs}[1]{\lvert #1 \rvert}
\begin{document}

\begin{abstract}
We establish a doubly-weighted vertical Sato-Tate law for GL(4) with explicit error terms. The main ingredient is an extension of the orthogonality relation for Maass cusp forms on GL(4) of Goldfeld, Stade, and Woodbury from spherical to general forms, and without their assumption of the Ramanujan conjecture for the error term.
\end{abstract}

\maketitle


\section{Introduction}

The Kuznetsov trace formula is a fundamental tool in the analytic theory of automorphic forms. The case of GL($n$) for $n=2,3$ is by now well-established, and we refer to \cite{blomer} for an excellenet survey.  Recently Goldfeld, Stade, and Woodbury established a Kuznetsov formula for $n=4$, and using it deduced an orthogonality relation for Maass cusp forms \cite{GSW}. The orthogonality relation was proved for $n=3$ by Zhou, and was used to prove a doubly-weighted vertical Sato-Tate with rate of converegence.   The goal of this note is to establish the same result for $n=4$.

As was already remarked in \cite{GSW}, the techniques to obtain the such an application is already well-established. This note confirms that expectation. In order to do so, one has to generalize the orthogonality relation established in \cite[Theorem 1.1.1]{GSW} that was proved only for spherical Maass cusp forms. We shall provide crude estimates here for  general Maass cusp forms.
Let $\{\phi_j\}$ be an orthogonal basis of even Maass cusp forms for SL$_4(\Z)$ tempered at infinity with associated Langlands parameters 
\[
\alpha^{(j)} = \big(\alpha^{(j)}_1, \alpha^{(j)}_2, \alpha^{(j)}_3,\alpha^{(j)}_4\big)\in 
  \left(i \mathbb R\right)^4,
\]
and
$L$-th Fourier coefficient $A_j(L)$ where
$L = (\ell_1, \ell_2, \ell_3)\in \mathbb Z^3$. Let $\mathcal L_j = L(1, \mathrm{Ad}, \phi_j)$. Our first result is then the following extension of \cite[Theorem 1.1.1]{GSW} to general Fourier coefficients. 

\begin{thm}
\label{orth}
Let $T\ge1$ and $R\ge 14$. Let $h_{T,R}$ be the test function defined in \eqref{test}, and define $\omega_j(T) = {h_{T, R}(\alpha^{(j)})}/{\mathcal L_j}$. Then as $T\to\infty$, we have
\begin{multline*}
\sum^\infty_{j=1}A_j(L)\overline{A_j(M)}\omega_j(T) = \delta_{L,M}\Big(c_1T^{9+8R} +  c_2 T^{8+8R} +  c_3 T^{7+8R}\Big) \\
+ {O}_{\epsilon,R}\bigg( (LM)^{\frac12+\epsilon}T^{6+8R+\epsilon}  + (LM)^{\frac12+\epsilon} T^{5+8R+\epsilon} + (\ell_1m_1)^{15/2}(\ell_2m_2)^{7}(\ell_3m_3)^{15/2} T^{4+8R+\epsilon} \bigg).
\end{multline*}
where $LM = \ell_1\ell_2\ell_3m_1m_2m_3$. Also, $c_1,c_2,c_3>0$ are constants that depend at most on $R$, $\epsilon>0$, and $\delta_{L,M}=1$ if $L=M$ and zero otherwise. 
\end{thm}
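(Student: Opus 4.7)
The plan is to apply the GL(4) Kuznetsov trace formula of Goldfeld--Stade--Woodbury with test function $h_{T,R}$, but with two modifications compared to \cite{GSW}: the test function is chosen to detect general (not just spherical) Maass forms, and the Eisenstein contribution is estimated without invoking the Ramanujan conjecture. Concretely, I would replace the spherical Whittaker function in the GSW setup with the appropriate Whittaker transform that picks out forms of arbitrary $K$-type, so that the cuspidal spectral side becomes precisely $\sum_j A_j(L)\overline{A_j(M)}\omega_j(T)$ after dividing each cuspidal summand by $\mathcal L_j$.

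Next, I extract the main term from the diagonal (identity Weyl element) contribution on the geometric side, which equals $\delta_{L,M}$ times an integral of $h_{T,R}$ against the Plancherel measure $|c(\nu)|^{-2}\,d\nu$ on the tempered locus $(i\mathbb R)^4$. Since $|c(\nu)|^{-2}$ is a polynomial of known degree in the Harish-Chandra parameter, a straightforward asymptotic expansion in $T$ of this integral produces three explicit main terms of the form $c_k T^{10-k+8R}$ for $k=1,2,3$, matching the claimed shape.

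Third, I would bound the off-diagonal, Weyl-element Kloosterman contributions. Each non-identity Weyl element $w$ produces a sum of GL(4) Kloosterman sums $S_w(L,M,c)$ weighted against an integral transform of $h_{T,R}$. Using the Stevens/Friedberg-type square-root bounds $|S_w(L,M,c)|\ll (LM)^{1/2+\epsilon}$ combined with stationary-phase estimates for the integral transforms (which supply savings as a power of $T$), the contributions of the smaller Bruhat cells are controlled by $(LM)^{1/2+\epsilon}T^{6+8R+\epsilon}$ and $(LM)^{1/2+\epsilon}T^{5+8R+\epsilon}$. For the long Weyl element, where no sharp Kloosterman bound is available, a trivial estimate together with a volume count on the moduli $c$ yields the cruder error $(\ell_1m_1)^{15/2}(\ell_2m_2)^7(\ell_3m_3)^{15/2}T^{4+8R+\epsilon}$, the exponents $15/2,7,15/2$ reflecting the dimension of the relevant cell, essentially as in \cite{GSW}.

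The main obstacle is the Eisenstein contribution. In \cite{GSW} it is bounded by invoking the Ramanujan conjecture at infinity for the inducing cusp forms on GL(2) and GL(3). To remove this assumption I would use the Kim--Sarnak bounds toward Ramanujan together with convexity for the relevant Rankin--Selberg $L$-functions at $\re(s)=1$; the losses from these weaker archimedean bounds must be absorbed into the three error terms above. This requires bookkeeping of the degenerate Eisenstein series attached to each of the four standard parabolics of GL(4), and verifying, for each $K$-type, that the Whittaker transform of $h_{T,R}$ retains enough decay in $T$ to swallow the loss from non-temperedness; this is the step I expect to be most delicate, and the one responsible for the ``crude'' quality of the third error term.
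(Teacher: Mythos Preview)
Your proposal rests on a misreading of what ``general'' means here. The extension from \cite{GSW} is not from spherical Maass forms to forms of arbitrary $K$-type; the basis $\{\phi_j\}$ and the test function $h_{T,R}$ are exactly the same as in \cite{GSW}. What changes is the Fourier-coefficient index: \cite[Theorem 1.1.1]{GSW} treats only $L=(\ell,1,1)$, $M=(m,1,1)$, whereas the present theorem allows arbitrary $L=(\ell_1,\ell_2,\ell_3)$, $M=(m_1,m_2,m_3)$. So the first step of your plan (replacing the spherical Whittaker function to pick out arbitrary $K$-types) is unnecessary and does not address the actual content of the theorem.

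You have also transposed the sources of the error terms. In the paper, the entire Kloosterman contribution $\mathcal K$ (all non-identity Weyl elements, not just the long one) is bounded by the crude term $(\ell_1m_1)^{15/2}(\ell_2m_2)^{7}(\ell_3m_3)^{15/2}T^{4+8R+\epsilon}$, and this comes directly from \cite[Proposition 4.0.4]{GSW}, which already holds for general $L,M$; no new Kloosterman analysis is needed. The two $(LM)^{1/2+\epsilon}$ error terms with $T^{6+8R+\epsilon}$ and $T^{5+8R+\epsilon}$ come from the \emph{Eisenstein} contribution $\mathcal E$, not from smaller Bruhat cells. Your proposed ``Stevens/Friedberg-type square-root bounds'' for $\GL(4)$ Kloosterman sums are not what is used, and the main term $\mathcal M$ is likewise already handled for general $L,M$ by \cite[Proposition 3.5.1]{GSW}.

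Consequently the only genuinely new ingredient is the Eisenstein bound for general $L,M$, and the paper does not use Kim--Sarnak plus convexity as you suggest. Instead it invokes the Ramanujan-on-average bound of Chandee--Li \cite{CL}, which yields $|A(k,l,n)|\ll (kln)^{1/2+\epsilon}$ for the Fourier coefficients of the Eisenstein series attached to each parabolic; plugging this into the proof of \cite[Theorem 7.0.7]{GSW} gives the $(LM)^{1/2+\epsilon}$ dependence directly and avoids any appeal to the Ramanujan conjecture for $\GL(2)$ or $\GL(3)$.
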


\noindent Note that \cite[Theorem 1.1.1]{GSW} is the case of $\ell_2=\ell_3=m_2=m_3=1$. It relies on the assumption of the Ramanujan Conjecture for $n = 2,3$ due to \cite[Theorem 7.0.7]{GSW}, but this assumption should be able to be removed at the cost of a slightly worse error term, depending on bounds towards Ramanujan, such as shown in \cite[Appendix]{KTF3} for GL(3). In our theorem we do not assume the Ramanujan Conjecture, instead we use weaker estimates based on work of Chandee and Li \cite{CL}, namely a Ramanujan on average bound.


As an application, the orthogonality relation implies the weighted vertical Sato-Tate law by a result of Zhou \cite{Zhou}. Denote by $dx$ the Sato-Tate measure by pushing forward the normalized Haar measure on SU($4)$ to $\hat T_0 /W$ where $\hat{T}_0$ is the diagonal torus and $W$ the Weyl group. Also, let $\hat{T}$ be the standard maximal torus of SL$_4(\C)$.

\begin{thm}
\label{ST}
Suppose $\phi_j$ corresponds to an irreducible unramified automorphic representation $\pi_j$ of $\mathrm{PGL}_4(\mathbb A_\Q)$ with Satake parameter $a_j(\pi_p)$ at $p$.  For any continuous test function $f$ in $\hat T/W$, we have the equality
\begin{equation}
\label{conv1}
\lim_{T\to\infty} \dfrac{\sum_{j=1}^\infty f(a_j(\pi_p))\omega_j(T)}{\sum_{j=1}^\infty \omega_j(T)}= \int_{\hat T_0/W}f(x) dx.
\end{equation}
If further $f$ is a monomial function as in \eqref{monom}, we have
\begin{align}
\label{conv2}
\dfrac{\sum_{j=1}^\infty f(a_j(\pi_p))\omega_j(T)}{\sum_{j=1}^\infty \omega_j(T)}= &\int_{\hat T_0/W}f(x) dx\\
& + O(P^{\frac12+\epsilon}(T^{6+8R+\epsilon}+T^{5+8R+\epsilon}) + P^{8} T^{4+8R+\epsilon}),\notag
\end{align}
with $P = p^{i_1+i'_1+ i_2+i'_2 + i_3+i'_3}$. 
\end{thm}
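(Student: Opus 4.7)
The plan is to follow Zhou's template \cite{Zhou} for GL(3) closely, with Theorem~\ref{orth} replacing its GL(3) orthogonality relation; essentially all of the analytic work is already encoded in Theorem~\ref{orth}, and the Sato--Tate law is then a rather formal consequence.

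For a monomial $f$, I would first use the Hecke relations for GL(4) to express $f(a_j(\pi_p))$ as a product $A_j(p^{i_1}, p^{i_2}, p^{i_3}) \overline{A_j(p^{i_1'}, p^{i_2'}, p^{i_3'})}$ of two Fourier coefficients at prime powers, where $(i_k)$ and $(i_k')$ record the highest weights of the two Schur polynomials comprising $f$. Applying Theorem~\ref{orth} with $L = (p^{i_1}, p^{i_2}, p^{i_3})$ and $M = (p^{i_1'}, p^{i_2'}, p^{i_3'})$, the Kronecker factor $\delta_{L,M}$ in the main term is nonzero precisely when $(i_k) = (i_k')$, which by Schur orthogonality of characters on $\mathrm{SU}(4)$ matches $\int_{\hat T_0/W} f\,dx$.

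Next, dividing by $\sum_j \omega_j(T)$, which is the $L = M = (1,1,1)$ specialization of Theorem~\ref{orth} (using $A_j(1,1,1) = 1$) and hence equals $c_1 T^{9+8R} + c_2 T^{8+8R} + c_3 T^{7+8R}$, yields \eqref{conv2}. The claimed error bound is inherited from that of Theorem~\ref{orth} after using $(LM)^{1/2+\epsilon} = P^{1/2+\epsilon}$ together with the inequality $p^{\frac{15}{2}(i_1+i_1') + 7(i_2+i_2') + \frac{15}{2}(i_3+i_3')} \le P^8$. For general continuous $f$, Peter--Weyl (equivalently Stone--Weierstrass) on the compact quotient $\hat T_0/W$ allows one to uniformly approximate $f$ by polynomials in the Satake parameters and then pass to the limit to deduce \eqref{conv1}.

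The main technical point is the Stone--Weierstrass passage: the error in \eqref{conv2} depends polynomially on $P$, which grows with the degree of the approximating polynomial, but since one fixes the approximation before letting $T \to \infty$ this causes no difficulty in obtaining \eqref{conv1}. Hence Theorem~\ref{ST} is essentially a formal consequence of Theorem~\ref{orth} and Schur orthogonality, and the real substance of the paper lies in establishing Theorem~\ref{orth}.
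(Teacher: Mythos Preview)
Your proof has a genuine gap in the treatment of the monomial case. You claim that the Hecke relations let you write
\[
f(a_j(\pi_p)) = A_j(p^{i_1},p^{i_2},p^{i_3})\,\overline{A_j(p^{i_1'},p^{i_2'},p^{i_3'})},
\]
but this is false in general. By Casselman--Shalika, a single Fourier coefficient $A_j(p^{l_1},p^{l_2},p^{l_3})$ corresponds to an \emph{irreducible} character $\chi_{\omega(l_1,l_2,l_3)}$, whereas the factor $\prod_k\chi_k^{i_k}$ appearing in the monomial \eqref{monom} is the character of the reducible representation $\bigotimes_k V_k^{\otimes i_k}$. The Hecke relations turn such a product into a \emph{linear combination} of Fourier coefficients, not a single one. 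Consequently your identification of the main term with $\delta_{(i_k),(i_k')}$ is also wrong: Schur orthogonality applies to irreducible characters, and $\int_{\hat T_0/W}f\,dx$ equals the multiplicity of the trivial representation in the full tensor product, not the indicator that $(i_k)=(i_k')$. A concrete counterexample is $f=\chi_1^4$ (so $i_1=4$, all other exponents zero): here $(i_k)\ne(i_k')$, your argument would give main term $0$, but in fact $\int f\,dx$ is the multiplicity of the trivial representation in $V^{\otimes 4}$ for $\mathrm{SL}_4$, which is $1$.

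The paper fixes this by first decomposing $\prod_k\chi_k^{i_k}\bar\chi_k^{i_k'}=\sum_{\mu}a_\mu\chi_\mu$ into irreducibles, so that $f(a_j(\pi_p))=\sum_{l_1,l_2,l_3}a_{\omega(l_1,l_2,l_3)}A_j(p^{l_1},p^{l_2},p^{l_3})$, and then applying Theorem~\ref{orth} term by term with $M=(1,1,1)$. The main term comes from the single summand $l_1=l_2=l_3=0$, whose coefficient $a_{\omega(0,0,0)}$ is exactly $\int f\,dx$. The error term then requires the additional estimate $\sum_{l_1,l_2,l_3}a_{\omega(l_1,l_2,l_3)}p^{\alpha(l_1+l_2+l_3)}=O(P^\alpha)$ from \cite{Zhou}, which replaces your single-term bound and is how the exponent $P^8$ arises. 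Your treatment of \eqref{conv1} via density of polynomials is fine and matches the paper's citation of \cite[Theorem~8.4]{Zhou}.
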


Similar results were established by Matz and Templier \cite{MT} and Jana \cite{Jana} for general $n$, but without explicit error terms, while more recently \cite{LNW} proved related results as an application of Ramanujan on average. 

\section{The general orthogonality relation}

\subsection{Notation}

We shall first recall some basic definitions and properties that we shall need. For $s\in\mathbb C$ with $ \re(s) > 5/2$, the L-function associated to  $\phi_j$ is given by
 \begin{align*}
 L(s, \phi_j)  = \sum_{m=1}^\infty A_j(m,1,1) m^{-s} = \prod_p \Bigg(1 - \frac{A_j(p,1,1)}{p^{s}} 
   + \frac{A_j(1,p,1)}{p^{2s}}
    - \frac{A_j(1,1,p)}{p^{3s}}
     +\frac{1}{p^{4s}}\Bigg)^{-1}.
     \end{align*}
and 
\[
L(s, \phi_j) = \prod_p L_p(s, \phi_j)
\]
where
$$
L_p(s,\phi_j)= \prod_{j=1}^4\left(1-{\alpha_j(p)\over p^s}\right)^{-1}= \sum_{m\ge0}{A_j(p^m,1,1)\over p^{ms}} .
$$
In the special case of $\SL_4(\mathbb Z)$, the Langlands parameters $(\alpha_1,\alpha_2,\alpha_3,\alpha_4)$ associated 
to  $s = \frac14 + \left(v_1,v_2, v_3\right)$ are given by
\[
 \alpha_1 = 3v_1+2v_2+v_3, \quad \alpha_2 = -v_1+2v_2+v_3, \quad \alpha_3 = -v_1-2v_2+v_3, \quad \alpha_4 = -v_1-2v_2-3v_3; 
\]
 where
 \[ 
 v_1 = \frac{\alpha_1 - \alpha_2}{4}, \quad v_2 = \frac{\alpha_2 - 
\alpha_3}{4}, \quad v_3 = \frac{\alpha_3 - \alpha_4}{4}. 
\]
Recall Theorems 9.3.11 and 10.8.1 of \cite{Goldfeld} for $n=4$, the Hecke relations
\[
A(m_1m_1', m_2m_2', m_3m_3') = A(m_1, m_2, m_3) A(m_1', m_2', m_3') 
\]
for $(m_1m_2m_3,m_1'm_2'm_3')=1$, and
\begin{equation}
\label{HR2}
A(m,1,1)A(m_1,m_2,m_3) = \sum_{\substack{c_1c_2c_3=m\\ c_i|m_i}}A\left(\frac{m_1c_3}{c_1},\frac{m_2c_1}{c_2},\frac{m_3c_2}{c_3}\right).
\end{equation}
Also $
A(m_1,m_2,m_3) = \overline{A(m_3,m_2,m_1)}.$

\subsection{Test functions}

Let $\alpha = (\alpha_1,\alpha_2,\alpha_3,\alpha_4)\in\C^4$ with $\alpha_1+\alpha_2+\alpha_3+\alpha_4 = 0$.  Let  $T > 1$ with $T\to\infty$ 
and $R\geq 14$ with $R$ fixed. We consider the test function
\[
p_{T,R}^\sharp(\alpha)  := e^{\frac{\alpha_1^2+\alpha_2^2+\alpha_3^2+\alpha_4^2}{2T^2}}\mathcal F_R(\alpha)
      \prod_{1\leq j \ne k \leq 4}
      \Gamma\left(\textstyle{\frac{2+R+\alpha_j - \alpha_k}{4}}\right),
\]
where
\[
 \mathcal F_R(\alpha)  = \left(\prod_{\sigma\in S_4}  \Big(1+\alpha_{\sigma(1)}-\alpha_{\sigma(2)}-\alpha_{\sigma(3)}+\alpha_{\sigma(4)}\Big) \right)^{\frac{R}{24}}.
\]
Given $2\leq n\leq 4$, we define
\begin{equation*}
p_{T,R}^{\sharp,(n)}(\alpha)  := 
    \begin{cases}
      e^{\frac{\alpha_1^2+\dots+\alpha_n^2}{2T^2}}
      \displaystyle{\prod_{1\leq j \ne k \leq n}}
      \Gamma\left(\textstyle{\frac{2+R+\alpha_j - \alpha_k}{4}}\right)
      & \mbox{ if }n=2,3,\\
      e^{\frac{\alpha_1^2+\dots+\alpha_n^2}{2T^2}} \mathcal{F}_R(\alpha)
      \displaystyle{\prod_{1\leq j \ne k \leq n}}
      \Gamma\left(\textstyle{\frac{2+R+\alpha_j - \alpha_k}{4}}\right)
      & \mbox{ if }n=4.
      \end{cases}
\end{equation*}
Suppose that $\phi$ is a Maass cusp form for $\GL(n)$ with Langlands parameter $\alpha(\phi):=\alpha=(\alpha_1,\ldots,\alpha_n)\in \C^n$.  Then we define the test function
\begin{equation}
\label{test}
 h_{T,R}^{(n)}(\phi) := \frac{ \big\lvert p_{T,R}^{\sharp,(n)}(\alpha) \big\rvert^2}{\prod\limits_{1\leq j\neq k\leq n} \Gamma\left(\frac{1+\alpha_j-\alpha_k}{2}\right)}.
\end{equation}
We shall often omit the superscript $(n)$ above when $n=4$.

\subsection{The orthogonality relation} 
The orthogonality conjecture \cite[Conjecture 1.0.2]{GSW} states that for some constant $0<\theta<1$, the relation
\begin{equation}
\label{conj}
\sum^\infty_{j=1}A_j(M)\overline{A_j(M')}\frac{h_{T, R}(\alpha^{(j)})}{\mathcal L_j} = \delta_{M,M'}\sum^\infty_{j=1}\frac{h_{T, R}(\alpha^{(j)})}{\mathcal L_j} + O_{M,M'}\left(\frac{h_{T, R}(\alpha^{(j)})}{\mathcal L_j}\right)^\theta
\end{equation}
holds.  In the case $n=4$, \cite{GSW} prove this for the test function $h_{T,R}(\alpha)$ above, which can be written as
\begin{align*} & e^{\frac{\alpha_1^2+\alpha_2^2+\alpha_3^2+\alpha_4^2}{T^2}} \prod\limits_{\substack{1\leq j, k \leq 4\\ j\neq k}}
      \frac{
      \Gamma\left(\textstyle{\frac{2+R+\alpha_j - \alpha_k}{4}}\right)^2 
      }{
      \Gamma\left( \frac{1+\alpha_j-\alpha_k}{2}  \right)}\prod\limits_{\sigma\in S_4}  \Big(1+\alpha_{\sigma(1)}-\alpha_{\sigma(2)}-\alpha_{\sigma(3)}+\alpha_{\sigma(4)}\Big)^{\frac{R}{12}}.
\end{align*}
We assume each Maass cusp form $\phi_j$ is normalized so that its first Fourier coefficient $A_j(1,1,1) = 1.$  Let $\ell, m \in \mathbb Z$  with $\ell m\ne 0.$  

\begin{thm}[{\cite[Theorem 1.1.1]{GSW}}]
With notation as above, we have for $T\to\infty$, 
\begin{multline*}
\sum\limits_{j=1}^\infty  A_j(\ell,1,1) \overline{A_j(m,1,1)} \frac{h_{T,R}\left(\alpha^{(j)}   \right)}{\mathcal L_j} 
=   \delta_{\ell,m}\Big(c_1T^{9+8R} +  c_2 T^{8+8R} +  c_3 T^{7+8R}\Big) \\
+ {O}_{\epsilon,R}\bigg( |\ell m|^{{\frac25}+\epsilon}   T^{6+8R+\epsilon}  + \lvert \ell m\rvert^{\frac{7}{32}+\epsilon} T^{5+8R+\epsilon} + \lvert \ell m\rvert^{\frac{15}{2}} T^{4+8R+\epsilon} \bigg),
\end{multline*}
where $\delta_{\ell,m}$ is the Kronecker symbol and  $c_1,c_2,c_3>0$ are absolute constants which depend at most on $R$. Note that $h_{T,R}$ is of size $T^{8R}$ on the relevant support. The error term assumes the Ramanujan conjecture for $\mathrm{GL}(n)$ for $n\le 3$.
\end{thm}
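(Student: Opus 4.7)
The plan is to apply the $\GL(4)$ Kuznetsov trace formula developed in \cite{GSW}. Inserting the spectral test function $h_{T,R}(\alpha)/\mathcal L_j$, the cuspidal spectral side yields the sum of interest, up to contributions from the Eisenstein spectrum attached to each standard parabolic of $\SL_4$. On the arithmetic side one obtains a decomposition indexed by the Weyl group $W$ of $\GL(4)$: for each $w \in W$, a sum of $\GL(4)$ Kloosterman sums $\mathrm{Kl}_w(\ell, m; c)$ against an integral transform of $h_{T,R}$ encoding the geometry of the Bruhat cell indexed by $w$.

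The main terms arise from the identity Weyl element. Its contribution collapses to a spectral integral of $h_{T,R}(\alpha)$ against the spherical Plancherel measure on $(i\R)^3$. Via Stirling's approximation applied to the twelve gamma factors $\Gamma\bigl(\tfrac{2+R+\alpha_j - \alpha_k}{4}\bigr)^2$ appearing in $|p_{T,R}^\sharp|^2$ and a Mellin--Barnes contour shift in the three spectral coordinates, one extracts a polynomial in $T$ whose three leading terms are $c_1 T^{9+8R} + c_2 T^{8+8R} + c_3 T^{7+8R}$; here $T^{8R}$ is the size of $h_{T,R}$ on its essential support $|\alpha| \lesssim T$, while the $T^9$ comes from the spherical Plancherel density integrated over the three-dimensional slice $\alpha_1+\alpha_2+\alpha_3+\alpha_4=0$. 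The $\delta_{\ell,m}$ follows because the diagonal cell contributes only when the two Fourier-coefficient labels match.

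The error terms come from two sources. The non-identity Weyl elements contribute weighted Kloosterman-type sums; inserting Weil--Deligne type estimates at the long element together with a direct size bound for its oscillatory kernel integral gives the $|\ell m|^{15/2} T^{4+8R+\epsilon}$ piece, while the intermediate Weyl elements yield the terms with exponents $2/5 + \epsilon$ and $7/32 + \epsilon$ on $|\ell m|$, obtained via stationary-phase and the sharpest currently available subconvex bounds for the associated $\GL(4)$ exponential sums. The main technical obstacle is the continuous spectrum: for each non-trivial Levi $M \subset \SL_4$, one encounters a spectral integral of $\GL(n)$ cuspidal data with $n \le 3$ against mean values of Rankin--Selberg $L$-functions. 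Bounding these Eisenstein contributions pointwise forces the appeal to the Ramanujan conjecture for $\GL(n)$, $n \le 3$, whence the assumption recorded in the statement; replacing this by the Ramanujan-on-average bound of Chandee--Li only inflates the error mildly, which is precisely the route Theorem \ref{orth} will take.
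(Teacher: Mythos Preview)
Your overall architecture is right: the result is read off the $\GL(4)$ Kuznetsov formula of \cite{GSW}, with the cuspidal side giving the sum in question and the geometric side splitting as $\mathcal M + \mathcal K - \mathcal E$ (main term, Kloosterman contribution, Eisenstein contribution). Your description of $\mathcal M$ is fine.

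The genuine gap is in your attribution of the three error terms. You assign the $|\ell m|^{2/5+\epsilon}T^{6+8R+\epsilon}$ and $|\ell m|^{7/32+\epsilon}T^{5+8R+\epsilon}$ pieces to the intermediate Weyl cells, obtained via ``stationary phase and subconvex bounds for $\GL(4)$ exponential sums.'' That is not where they come from. In \cite{GSW} the entire Kloosterman block $\mathcal K$, summed over \emph{all} non-trivial $w\in W_4$, is bounded by the single term $|\ell m|^{15/2}T^{4+8R+\epsilon}$; this is \cite[Proposition~4.0.4]{GSW} specialised to $r=4$, and no Weil--Deligne or subconvexity input enters. The two larger error terms come instead from the Eisenstein contribution $\mathcal E$: the parabolic $\mathcal P_{3,1}$ produces the $T^{6+8R+\epsilon}$ term and $\mathcal P_{2,2}$ the $T^{5+8R+\epsilon}$ term (see \cite[Theorem~7.0.7]{GSW} and the present paper's Proposition~\ref{thm:EisensteinBound}). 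The exponents $2/5$ and $7/32$ are not subconvex savings on exponential sums; they are exactly the Luo--Rudnick--Sarnak bound toward Ramanujan for $\GL(3)$ and the Kim--Sarnak bound for $\GL(2)$, applied to the Hecke eigenvalues of the cusp forms sitting in the Levi of each parabolic. This is precisely why the statement records a dependence on Ramanujan for $n\le 3$, and why Theorem~\ref{orth} can trade that hypothesis for the Chandee--Li average bound at the cost of worsening those two exponents to $\tfrac12+\epsilon$.

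So the sketch would not actually produce the stated error: if you only analysed the Kloosterman cells you would never see $2/5$ or $7/32$, and if you treated the Eisenstein terms as a minor nuisance you would miss the dominant $T^{6+8R}$ contribution entirely.
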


This orthogonality relation is obtained by evaluating the Kuznetsov trace formula for GL(4). The cuspidal contribution to the Kuznetsov formula is given by
\[
\mathcal C =\sum_{j=1}^\infty  \frac{A_j(L)\overline{A_j(M)}\left|  p_{T,R}^\#\left( \overline{\alpha^{(j)}}\right)\right|^2}{ \mathcal L_j \prod\limits_{1\le j \ne k\le 4} \Gamma\left( \frac{1+\alpha_j-\alpha_k}{2}  \right)}.
\]
The main theorem of \cite{GSW} shows that for $N=4$, $M=(m,1,1)$, and $L=(\ell,1,1)$, that $\mathcal C$ is equal to the sum of
\begin{equation}
\label{M}
\mathcal M  =  \delta_{M,L}\left(c_1T^{9+8R}+c_2 T^{8+8R}+c_3T^{7+8R} + O(T^{6+8R}) \right) 
\end{equation}
and
\begin{equation}
\label{KE}
\mathcal K - \mathcal E  = {O}_{\epsilon,R}\bigg( |\ell m|^{{\frac25}+\epsilon}   T^{6+8R+\epsilon}  + \lvert \ell m\rvert^{\frac{7}{32}+\epsilon} T^{5+8R+\epsilon} + \lvert \ell m\rvert^{\frac{15}{2}} T^{4+8R+\epsilon} \bigg)
\end{equation}
where $\mathcal M, \mathcal K, \mathcal E$ are referred to as the main term, Kloosterman contribution, and Eisenstein contribution respectively.

We note that the estimates on $\mathcal M$ and $\mathcal K$ are obtained for general $M,L$ by Proposition 3.5.1 and Proposition 4.0.4 of \cite{GSW}. Therefore only the $\mathcal E$ estimate needs to be generalized to arbitrary $M,L$. In \cite[Remark 1.1.3]{GSW} the authors note that it is possible, using the Hecke relations, to obtain a more general version of Theorem but the formulas get quite complex and messy. We shall use the Hecke relations to get the more general estimate parallel to \cite[Theorem 7.0.7]{GSW}. The main term $\mathcal M$ being as above, we shall now describe $\mathcal K$ and $\mathcal E$ more explicitly.

\subsection{The Kloosterman contribution}

Let $M=(m_1,m_2,m_3),   L=(\ell_1,\ell_2,\ell_3) \in \mathbb Z^3$. Denote by $W_4\simeq S_4$ the Weyl group of GL$_4(\R)$. The Kloosterman contribution to the Kuznetsov trace formula is given by
\[
\mathcal K  =  C_{L,M}^{-1} \sum\limits_{w\in W_4} \mathcal I_w 
\] 
where 
\[
C_{L,M}=  c_4  (\ell_1 m_1)^3(\ell_2m_2)^4(\ell_3m_3)^3,
\]
 and
$\mathcal I_w$ is a sum of Kloosterman integrals defined in \cite[(4.0.1)]{GSW} and $c_4$ is a positive absolute constant. Let $r\geq 1$ be an integer.  Then for $R$ sufficiently large and any $\epsilon>0$, we have from \cite[Proposition 4.0.4]{GSW} that the Kloosterman contribution is bounded by
 \[ C_{L,M}^{-1}\big| \mathcal{I}_{w_j} \big| \ll_{\epsilon, R} (\ell_1m_1)^{2r-1/2}(\ell_2m_2)^{2r-1}(\ell_3m_3)^{2r-1/2} B_j(T), \]
where
\[ B_j(T) = 
 \begin{cases}
 T^{\epsilon+8R+20-4r} & \mbox{ if }j=2,3,4, \\
 T^{\epsilon+8R+19-5r} & \mbox{ if }j=6,7, \\
 T^{\epsilon+8R+18-6r} & \mbox{ if }j=5,8.
 \end{cases}\]
The index $j=1,\dots,8$ runs over elements in $W_4$. Optimising at $r=4$, we have that 
\[
\mathcal{K} = O_{\epsilon,R}((\ell_1m_1)^{15/2}(\ell_2m_2)^{7}(\ell_3m_3)^{15/2} T^{4 + 8R+\epsilon}). 
\]

\subsection{The Eisenstein contribution}

Suppose that $4=n_1+\dots+n_r$ is a partition of $4$ and $\Phi=(\phi_1,\ldots,\phi_r)$ where, for $1\le j\le r$, $\phi_j$ is  a Maass cusp form for $\SL(n_j, \mathbb Z)$ if $n_j>1$, while $\phi_j$ is the constant function (properly normalized) if $n_j=1$.   Let $\mathcal{P}=\mathcal{P}_{n_1,\ldots,n_r}$.  Then we define
\begin{equation*}
\mathcal E_{\mathcal P,\Phi} =  \int\limits_{\re(s_1)=0}\dots \int\limits_{\re(s_{r-1})=0}  A_{E_{\mathcal P,\Phi}}(L, s)
 \overline{A_{E_{\mathcal P,\Phi}}(M, s)} \left| p_{T,R}^\#\big(\alpha_{_{\mathcal P, \Phi}}(s)\big)\right|^2  ds_1 s ds_{r-1},
\end{equation*}
and denote
\begin{equation*}
\mathcal  E_{\mathcal P_{\rm\text{min}}} = \mathcal E_{\mathcal P_{1,1,1,1},\Phi},
\end{equation*}
where $\alpha_{_{\mathcal P, \Phi}}(s)$,  $\alpha_{_{\mathcal P_{\rm Min}}}(s)$ denote the Langlands parameters of the Eisenstein series $E_{\mathcal P,\Phi}(g,s)$,
$E_{\mathcal P_{\rm Min}}(g,s)$, respectively. Also, $A_{E_{\mathcal P, \Phi}}(L, s)$, $A_{E_{\mathcal P, \Phi}}(M, s)$ denote the $L$-th and $M$-th Fourier coefficients of  $E_{\mathcal P,\Phi}(g,s)$, and similarly for $E_{\mathcal P_{\rm Min}}(g,s)$. The following estimate follows from a Ramanujan on average bound of Chandee and Li.

\begin{lem}
\label{CLbound}
For positive integers $k,l,n$ and $\epsilon>0$, we have
\[
|A(k,l,n)|\ll (kln)^{\frac12+\epsilon}.
\]
\end{lem}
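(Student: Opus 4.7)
The plan is to reduce the pointwise bound on the general Fourier coefficient $A(k,l,n)$ to the special case of coefficients of the form $A(m,1,1)$, which is controlled by the Ramanujan-on-average estimate of Chandee and Li.

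First I would use the multiplicativity furnished by the first Hecke relation on coprime arguments to factor
\[
|A(k,l,n)| = \prod_{p} |A(p^{v_p(k)}, p^{v_p(l)}, p^{v_p(n)})|.
\]
This reduces the claim to the local bound $|A(p^a, p^b, p^c)| \ll_\epsilon p^{(a+b+c)(1/2+\epsilon)}$ at every prime $p$, from which the global statement follows by multiplying over primes dividing $kln$.

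Second, using the Hecke relation \eqref{HR2} iteratively (or equivalently, viewing $A(p^a, p^b, p^c)$ as a Schur polynomial in the Satake parameters $\alpha_i(p)$ and converting between Schur polynomials and power sums via Newton's identities), I would express $A(p^a, p^b, p^c)$ as a polynomial of bounded total degree in the coefficients $A(p^j, 1, 1)$ with $j \le a+b+c$. The resulting expression has $O_\epsilon((a+b+c)^\epsilon)$ many terms, each involving $O(1)$ factors. At this point, the Chandee-Li second-moment bound $\sum_{m \leq M}|A(m,1,1)|^2 \ll M^{1+\epsilon}$ yields, by positivity, the pointwise estimate $|A(m,1,1)| \ll m^{1/2+\epsilon}$. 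Substituting into the polynomial expression gives the required local bound.

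The main obstacle is the combinatorial reduction in the second step: one must verify that the number of terms arising from repeated application of \eqref{HR2} remains polynomially (in $a+b+c$) controlled, in order not to spoil the $\epsilon$-exponent. This is cleanest through the symmetric-function perspective, where the degrees and multiplicities of monomials are transparent; a more direct induction using \eqref{HR2} also works but demands careful bookkeeping of the boundary terms arising from the divisibility conditions $c_i | m_i$.
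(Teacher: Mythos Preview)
There is a quantitative gap in your second step. When you write $A(p^a,p^b,p^c)$ as a polynomial in the $h_j:=A(p^j,1,1)$ via Jacobi--Trudi (or by iterating the Hecke relation \eqref{HR2}), every monomial $h_{j_1}\cdots h_{j_r}$ that appears has weight $j_1+\cdots+j_r=|\lambda|$, where $\lambda$ is the partition attached to $(a,b,c)$ by the Casselman--Shalika formula; with the conventions in force here, $|\lambda|=a+2b+3c$. Thus the pointwise input $|h_j|\ll p^{j(1/2+\epsilon)}$ together with the triangle inequality yields only
\[
|A(p^a,p^b,p^c)|\ll p^{(a+2b+3c)(1/2+\epsilon)},
\]
which is strictly weaker than the target $p^{(a+b+c)(1/2+\epsilon)}$ whenever $b>0$ or $c>0$. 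Concretely, $A(1,p,1)=A(p,1,1)^2-A(p^2,1,1)$, and your route gives at best $|A(1,p,1)|\ll p^{1+\epsilon}$, whereas the lemma demands $p^{1/2+\epsilon}$. No bookkeeping on the number of terms fixes this: the obstruction is the homogeneous degree in the Satake parameters, not the combinatorics of the expansion.

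The paper proceeds differently. It invokes the decomposition of Chandee--Li (their Lemma~3.4), which writes $A(k,l,n)$ as a short divisor sum of products $A(k',1,1)\cdot A(1,l',n')$, and then controls the second factor by a \emph{separate} estimate of Chandee--Li (their Lemma~3.5) rather than by reducing it again to first-column coefficients. It is precisely this second input---a direct bound for $A(1,l',n')$---that your argument is missing; the Ramanujan-on-average bound for $A(m,1,1)$ alone does not suffice.
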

\begin{proof}
This follows from \cite[Lemma 3.4]{CL}, namely, the formula
\[
A(k,l,n)  = \sum_{d|(k,l)}\sum_{e|(d,k/d)}\sum_{f |(k,n)}\mu(d)\mu(e)A(\frac{k}{def},1,1)A(1,\frac{l}{d},\frac{dn}{ef}).
\]
and an application of \cite[Lemma 3.5]{CL}.
\end{proof}

Now define
\[
 \mathcal{E}_{\mathcal P}  = \sum_\Phi c_{L,M,\mathcal{P}}  \mathcal{E}_{\mathcal{P},\Phi}.
\]
Then the contribution to the Kuznetsov trace formula coming from the Eisenstein series is given by
\[
\mathcal{E}= c_1 \mathcal{E}_{\mathcal P_{\rm\text{min}}}+c_2\mathcal{E}_{\mathcal P_{2,1,1}}+c_3\mathcal{E}_{\mathcal P_{2,2}}+c_4\mathcal{E}_{\mathcal P_{3,1}},
\]  
 for constants $c_1,c_2,c_3,c_4 > 0$. We shall be satisfied with the ineffective estimate below.

\begin{prop}
\label{thm:EisensteinBound}
Let $L=(\ell_1,\ell_2,\ell_3), M= (m_1,m_2,m_3)$, and set $LM = \ell_1\ell_2\ell_3m_1m_2m_3$.  Then 
\begin{align*} 
& \abs{\mathcal{E}_{\mathcal{P}_{\rm\text{min}}}}  \ll_{\epsilon} (LM)^{\frac12+\epsilon} T^{3+8R+\epsilon}, 
&&\abs{\mathcal{E}_{\mathcal P_{2,1,1}}}  \ll_{\epsilon} (LM)^{\frac12+\epsilon} T^{2+8R+\epsilon}, \\
&\abs{\mathcal{E}_{\mathcal P_{2,2}}}   \ll_{\epsilon}  (LM)^{\frac12+\epsilon}   T^{5+8R+\epsilon}, 
&& \abs{\mathcal{E}_{\mathcal P_{3,1}}}   \ll_{\epsilon} (LM)^{\frac12+\epsilon}   T^{6+8R+\epsilon} ,  
\end{align*}
as $T\to\infty$ for any fixed $\epsilon>0$.
\end{prop}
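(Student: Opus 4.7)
The goal is to extend the bounds on each $\mathcal{E}_\mathcal{P}$ from \cite[\S7]{GSW}, proved there only for Fourier coefficients of the form $A(\ell,1,1)$ and $A(m,1,1)$, to general triples $L=(\ell_1,\ell_2,\ell_3)$ and $M=(m_1,m_2,m_3)$. The key new input is Lemma \ref{CLbound}, which provides the Ramanujan-on-average bound $|A(k,l,n)| \ll (kln)^{1/2+\epsilon}$ and thereby replaces the pointwise Ramanujan conjecture used in the original argument.

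For each parabolic $\mathcal{P}=\mathcal{P}_{n_1,\ldots,n_r}$, I would first write the $L$-th Fourier coefficient $A_{E_{\mathcal{P},\Phi}}(L,s)$ of the Eisenstein series explicitly as a product of Fourier coefficients of the constituent cusp forms $\phi_j$ on $\mathrm{GL}(n_j)$ together with a divisor-type sum in the entries of $L$, following the calculations of \cite[\S7]{GSW}. Applying the triangle inequality to $|A_{E_{\mathcal{P},\Phi}}(L,s)\,\overline{A_{E_{\mathcal{P},\Phi}}(M,s)}|$ and invoking Lemma \ref{CLbound} on the relevant coefficients extracts the factor $(LM)^{1/2+\epsilon}$, with the accompanying divisor sums absorbed into $(LM)^\epsilon$. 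These bounds then feed into the integral defining $\mathcal{E}_{\mathcal{P},\Phi}$. The weight $|p_{T,R}^{\#}(\alpha_{\mathcal{P},\Phi}(s))|^2$ has size $T^{8R}$ on its effective support and restricts the $(r-1)$-dimensional spectral integration to a box of size $T$ in each $\mathrm{Im}(s_j)$; simultaneously, for any Maass cusp form $\phi_j$ appearing in $\Phi$ the spectral parameter $\alpha(\phi_j)$ is localized to size $T$, so that the Weyl law for $\mathrm{GL}(n_j)$ controls the sum over the cuspidal basis. Matching the resulting dimension counts to those of \cite{GSW} reproduces the announced powers $T^{3+8R}$, $T^{2+8R}$, $T^{5+8R}$, and $T^{6+8R}$ for the four parabolics, since the $L,M$ dependence has now been cleanly decoupled from the spectral analysis.

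The main obstacle arises for the non-minimal parabolics, particularly $\mathcal{P}_{3,1}$, whose Fourier coefficient formula involves a cusp form on $\mathrm{GL}(3)$ whose Fourier coefficients are not known to be bounded individually. Without Ramanujan one cannot bound these pointwise by $O(1)$; instead, the Hecke relation \eqref{HR2} must be used to rewrite each coefficient as a combination of coefficients of shape $A(\cdot,1,1)$, to which Lemma \ref{CLbound} directly applies, and one must then check that the extra divisor sums contribute at most $(LM)^\epsilon$ and do not blow up when combined with the $s$-integration. This is precisely the mechanism behind the Chandee--Li bound, so using the lemma as a black box should suffice in principle, but the bookkeeping required to preserve the clean $(LM)^{1/2+\epsilon}$ shape of the final bound is what makes the argument delicate.
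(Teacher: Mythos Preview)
Your proposal is correct and follows essentially the same route as the paper. The paper's own proof is in fact even terser than what you wrote: it simply says that one bounds the products $A_{E_{\mathcal P,\Phi}}(L,s)\overline{A_{E_{\mathcal P,\Phi}}(M,s)}$ via Lemma~\ref{CLbound}, and that the remaining spectral analysis then goes through exactly as in \cite[Theorem 7.0.7]{GSW}. Your more detailed outline --- decoupling the $(LM)^{1/2+\epsilon}$ factor from the $T$-analysis and then reading off the $T$-exponents from the dimensions of the spectral integrals and the Weyl-law counts --- is precisely the content of that citation. The ``obstacle'' you flag for $\mathcal P_{3,1}$ is not treated as a separate difficulty in the paper: since Eisenstein series are Hecke eigenforms, the decomposition formula underlying Lemma~\ref{CLbound} applies to their Fourier coefficients directly, so one does not need to unwind into $\mathrm{GL}(3)$ cuspidal coefficients by hand.
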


\begin{proof}
The crucial step is to bound the product of Fourier coefficients, and the remainder of the analysis will follow in the same manner from the proof of \cite[Theorem 7.0.7]{GSW}. That is, we bound 
\[
A_{E_{\mathcal P,\Phi}}(L, s) \overline{A_{E_{\mathcal P,\Phi}}(M, s)}
\]
and
\[
A_{E_{\mathcal P_{\rm \text{\rm Min}}}}(L, s) \overline{A_{E_{\mathcal P_{\rm \text{\rm Min}}}}(M, s)}.
\]
using Lemma \ref{CLbound}.
\end{proof}

\begin{rem}
We note that our estimates here are much weaker, even in the spherical case, than that of \cite[Theorem 7.0.7]{GSW}. The reason for this is that the general Fourier coefficients require a calculation of the Hecke eigenvalues for general Hecke operators, which is not known, whereas the spherical case has been long known due to work of Goldfeld.
\end{rem}

Then putting the estimates for $\mathcal M,\mathcal K$, and $\mathcal E$ together in \eqref{M} and \eqref{KE} we obtain Theorem \ref{orth}.

\section{Sato-Tate with rate of convergence}

\subsection{Preliminaries}

We follow the setup of \cite{Zhou}. We first recall some facts about the representation theory of GL$(n)$. We specialize to $n=4$ for concreteness. Let $\pi = \otimes_{p\le \infty} \pi_p$ be a cuspidal automorphic representation of PGL$_4(\mathbb{A})$. For each finite prime $p<\infty$, we take $\pi_p$ to be an unramified admissible representation with Satake parameter
\[
a(\pi_p) = \begin{pmatrix}\alpha_{p,1} & & \\ & \ddots & \\ & & \alpha_{p,4}\end{pmatrix} \in \hat{T}/W.
\]
where $\hat T$ is the standard maximal torus of SL$_4(\mathbb C)$, and $W \simeq S_4$ is the usual Weyl group. The generalized Ramanujan conjecture implies that $|\alpha_{p,i}|=1$ for all $1 \le i \le 4$ and $p$. It is equivalent to the statement that $a(\pi_p)$ lies in the subtorus $\hat T_0/W$ of SU$(4)$. 

Let $\epsilon_i$ be the $i$-th standard basis vector in $\mathbb{R}^{4}$. Denote the root system of GL$_4(\mathbb C)$ by $\Phi=\{\epsilon_i -\epsilon_j: i\neq j\},$ and the set of positive roots $\Phi=\{\epsilon_i -\epsilon_j: i< j\}$. The set of integral weights $\Lambda$ of GL$_4(\mathbb C)$ is generated by the set of 
\[
\epsilon_i - \frac{1}{4}\sum_{j=1}^n\epsilon_j, \quad 1\le i \le 3.
\]
The Weyl chamber associated to $\Phi^+$ is then given by
\[
C = \left\{\sum_{i=1}^4a_i\epsilon_i: a_1\ge \dots \ge a_4, a_i\in\mathbb{R}, \sum_{i=1}^4a_i=0\right\}.
\]
Then given a weight $\mu$ in $\Lambda\cap C$, let $V_\mu$ be the highest weight representation of $\mu$ and $\chi_\mu$ its character. Formally, we may write $\chi_\mu$ as a finite sum of $e^\lambda$ for $\lambda\in\Lambda$ with nonnegative integer coefficients, invariant under the action of $W$. Let $V$ be the representation given by the standard emebedding of SL$_4(\mathbb{C})$ in GL$_4(\mathbb{C})$, and let $V_k = \wedge^k V$ be the $k$-th exterior product for $k=1,\dots,3$, so that $V_k$ corresponds to the highest weight representation of 
\[
\sum_{i=1}^k\Big(\epsilon_i - \frac{1}{4}\sum_{j=1}^n\epsilon_j\Big).
\]
Its character is given by
\[
\chi_k\Big(\begin{pmatrix}\alpha_{1} & & \\ & \ddots & \\ & & \alpha_{4}\end{pmatrix}\Big) =  \sum_{i_1<\dots<i_k}\alpha_{i_1}\dots  \alpha_{i_k},
\]
which is an elementary symmetric polynomial on $\hat T/W$. We define a map from $\hat T/W\to \C^{3}$ by
\[
\rho: \text{diag}(\alpha_1,\dots,\alpha_4) \to (\chi_1(\text{diag}(\alpha_1,\dots,\alpha_4)),\dots,\chi_n(\text{diag}(\alpha_1,\dots,\alpha_4))).
\]
And define a monomial function $f(\text{diag}(\alpha_1,\dots,\alpha_4))$ of the form
\begin{equation}
\label{monom}
\left(\sum_i \alpha_j\right)^{i_1}\left(\sum_i \bar \alpha_i\right)^{i_1'} \left(\sum_{i<j} \alpha_i\alpha_j\right)^{i_2}\left(\sum_{i<j} \overline{\alpha_i\alpha_j}\right)^{i_2'}\left(\sum_{i<j<k} \alpha_i\alpha_j\alpha_k\right)^{i_3}\left(\sum_{i<j<k} \overline{\alpha_i\alpha_j\alpha_k}\right)^{i_3'}
\end{equation}
for nonnegative integers $i_1,i_1',i_2,i_2',i_3,i_3'$. Define a bijection between $\Lambda\cap C$ and the set $\Z^{3}_{\ge0}$ of integer nonnegative triples
\[
\omega:(l_1,l_2,l_{3})\mapsto \sum_{i=1}^{3}\left(\sum_{k=1}^{3-i}l_k\right)\Big(\epsilon_i - \frac{1}{4}\sum_{j=1}^3\epsilon_j\Big).
\]
Then by the Casselman-Shalika formula we have that the Fourier coefficients of $\pi$ are given by $
A(p^{l_1},\dots,p^{l_k}) = \chi_{\omega(l_1,\dots,l_{n-1})}(a(\pi_p))$.

\subsection{Proof of the Theorem \ref{ST}}

The identity \eqref{conv1} follows immediately from \cite[Theorem 8.4]{Zhou}.  The proof of second \eqref{conv2} is essentially the same as that of \cite[Theorem 9.1]{Zhou}, as an application of the orthogonality relation and the Casselman-Shalika formula. We sketch the proof here.

Let $A_j(M)$ be the $M$-th Fourier coefficient of $\phi_j$. Denote $A_j(1,\dots,p,\dots,1)$ with $p$ in the $(4-k)$-th position as $A_j[k]$. It follows from the proof of \cite[Lemma 8.3]{Zhou} that
\begin{align*}
 \prod_{k=1}^{3}A_j[k]^{i_k}\overline{A_j[k]}^{i_k' }
&= \prod_{k=1}^{3}A_j[k]^{i_k}{A_j[4-k]}^{i_k' } \\
&= \prod_{k=1}^{3}\chi_k(a_k(\pi_p))^{i_k}\chi_{4-k}(a_k(\pi_p))^{i_k'}\\
& = \sum_{\mu\in\Lambda\cap C}a_\mu\chi_\mu(a_j(\pi_p)).
\end{align*}
The last line uses the property that 
\[
\prod_{k=1}^{3}\chi_k^{i_k}\chi_{3-k}^{i'_k} = \sum_{\mu\in\Lambda\cap C}a_\mu\chi_\mu
\]
where $a_\mu$ is the multiplicity of $V_\mu$ in the decomposition of the finite-dimensional representation
\[
\bigotimes^3_{k=1}(V_k^{\otimes i_k}\otimes V_k^{\otimes i'_k}) = \bigoplus_{\mu\in\Lambda\cap C}V_\mu^{\oplus a_\mu}.
\]
Then the assumption that $f$ is monomial translates to $f\circ\rho$ being given by
\[
(z_1,z_2,z_3) \mapsto z_1^{i_1}\bar z_1^{i_1'}z_2^{i_2}\bar z_2^{i_2'}z_3^{i_3}\bar z_3^{i_3'}, \qquad i_1,\dots,i_3'\in \Z_{\ge0},
\]
which implies by the analogous computation as in \cite[p.423]{Zhou} that 
\begin{align}
\label{diff}&\dfrac{\sum_{j=1}^\infty f(X_j(p))\omega_j(T)}{\sum_{j=1}^\infty \omega_j(T)} -  \int_{\hat T_0/W}f(x) dx \\
&= \sum_{l_1,l_2,l_3\ge0}a_{\omega((l_1,l_2,l_3))}\left(\frac{A_j(p^{l_1},p^{l_2},p^{l_3})\omega_j(T)}{\sum_j\omega_j(T)} - \delta_{l_1,0}\delta_{l_3,0}\delta_{l_3,0}\right),\notag
\end{align}
and applying the orthogonality relation of Theorem \ref{orth}, we may bound the inner expression by
\[
{O}_{\epsilon,R}\bigg( (p^{l_1+l_2+l_3})^{\frac12+\epsilon}T^{5+8R+\epsilon}(T+1) + p^{\frac{15 l_1}{2}+7l_2 +\frac{15l_3}{2}} T^{4+8R+\epsilon} \bigg).
\]
Also, by \cite[p.424]{Zhou} we have that for any $\alpha>0$, 
\[
\sum_{l_1,l_2,l_3\ge0}a_{\omega((l_1,l_2,l_3))}p^{\alpha(l_1+l_2,l_3)}  = O(p^{\alpha(i_1+i'_1+ \dots + i_3+i'_3)}),
\]
and it follows that \eqref{diff} equals 
\begin{align*}
&{O}\bigg(\sum_{l_1,l_2,l_3\ge0}a_{\omega((l_1,l_2,l_3))}( (p^{l_1+l_2+l_3})^{\frac12+\epsilon}(T^{6+8R+\epsilon}+T^{5+8R+\epsilon}) + p^{8(l_1+l_2+l_3)} T^{4+8R+\epsilon} \bigg)\\
&= O(P^{\frac12+\epsilon}(T^{6+8R+\epsilon}+T^{5+8R+\epsilon}) + P^{8} T^{4+8R+\epsilon}),
\end{align*}
with $P = p^{i_1+i'_1+ \dots + i_3+i'_3}$ as desired.

\subsection*{Acknowledgments} The author thanks Jack Buttcane and Keshav Aggarwal for helpful conversations related to this work. The author was partially supported by NSF grant DMS-2212924.

\bibliography{main}
\bibliographystyle{alpha}
\end{document}